\newcommand*{\Z}{\mathbb{Z}}
\newcommand*{\Q}{\mathbb{Q}}
\newcommand*{\R}{\mathbb{R}}
\DeclarePairedDelimiter{\angles}{\langle}{\rangle}
\DeclarePairedDelimiter{\floors}{\lfloor}{\rfloor}
\newcommand*{\verts}[1]{\left\lvert #1 \right\rvert}
\newcommand*{\braces}[1]{\left\lbrace #1 \right\rbrace}
\newcommand*{\parens}[1]{\left\lparen #1 \right\rparen}
\newcommand*{\card}{\verts}
\newcommand*{\setof}{\braces}
\newcommand*{\inner}{\angles}
\newcommand*{\deftobe}{\mathrel{\coloneqq}}
\newcommand*{\maps}{\colon}
\newcommand*{\st}{\,:\,}
\renewcommand*{\epsilon}{\varepsilon}
\renewcommand*{\phi}{\varphi}
\renewcommand{\subset}{\subseteq}
\newtheorem{thm}{Theorem}[section]
\newtheorem{lem}[thm]{Lemma}
\newtheorem{prop}[thm]{Proposition}
\theoremstyle{definition}
\newtheorem{defn}[thm]{Definition}
\theoremstyle{remark}
\newcommand*{\SL}{\mathsf{SL}}
\newcommand*{\GL}{\mathsf{GL}}
\DeclareMathOperator*{\latlen}{len}  
\newcommand*{\bnd}{\mathfrak{b}}  
\newcommand*{\intr}{\mathfrak{I}} 
\newcommand*{\A}{\mathfrak{A}}    
\newcommand*{\inhl}[1]{\widetilde{#1}}   
\newcommand*{\tran}{\mathrm{t}}  
\renewcommand*{\card}[1]{\verts{#1}}
\newcommand{\ind}[1]{j_{#1}}
\newcommand{\pZ}{\mathrm{p}\Z}
\DeclareMathOperator{\conv}{Conv}
\newcommand*{\ehr}{\mathfrak{L}}
\DeclareMathOperator{\lcm}{lcm}
\newcommand{\dual}[1]{#1^{\vee}}
\DeclareMathOperator{\den}{den}
\theoremstyle{plain}
\newtheorem*{thmCoefficientPeriodSequences}{%
   Theorem \ref{thm:CoefficientPeriodSequences}%
}
\newtheorem*{thmBelowScottsRegion}{%
   Theorem \ref{thm:BelowScottsRegion}%
}
\title{%
   Ehrhart quasi-period collapse in rational polygons
}%
\author[T.~B.~McAllister]{%
   Tyrrell B. McAllister
}
\address[T.~B.~McAllister]{%
   Department of Mathematics \\
   University of Wyoming \\
   Laramie, WY 82071 \\
   USA
}
\email{%
   tmcallis@uwyo.edu
}
\thanks{%
   First author supported by the Netherlands Organisation for
   Scientific Research (NWO) Mathematics Cluster DIAMANT. Second
   author supported by Wyoming EPSCoR%
}%
\author{%
   Matthew Moriarity
}
\subjclass[2000]{Primary 52B05; Secondary 05A15, 52B11, 52C07}
\keywords{Ehrhart polynomials, Quasi-polynomials, Lattice points,
Convex bodies, Rational polygons, Scott's inequality}
\begin{document}

\begin{abstract}
   In 1976, P.~R.~Scott characterized the Ehrhart polynomials
   of convex integral polygons.  We study the same question for
   Ehrhart polynomials and quasi-polynomials of
   \emph{non}-integral convex polygons.
%
   Turning to the case in which the Ehrhart quasi-polynomial has
   nontrivial quasi-period, we determine the possible minimal
   periods of the coefficient functions of the Ehrhart
   quasi-polynomial of a rational polygon.
\end{abstract}

\maketitle

\section{%
   Introduction%
}%

A \emph{rational polygon} $P \subset \R^{2}$ is the convex hull of
finitely many rational points, not all contained in a line.  Given
a positive integer $n$, let $nP \deftobe \{nx \in \R^{2} \st x \in
P\}$ be the dilation of $P$ by $n$.  The $2$-dimensional case of a
well-known result due to Ehrhart \cite{Ehr1962b} states that the
lattice-point enumerator $n \mapsto \card{nP \cap \Z^{2}}$ for $P$
is a degree-$2$ quasi-polynomial function with rational
coefficients.  That is, there exist periodic functions $c_{P,0},
c_{P,1}, c_{P,2} \maps \Z \to \Q$ with $c_{P,2} \not\equiv 0$ such
that, for all positive integers $n$,
\begin{align*}
   \ehr_{P}(n)
   &\deftobe
      c_{P,0}(n)
      +  c_{P,1}(n) n
      + c_{P,2}(n) n^{2}
   = \card{nP \cap \Z^{2}}.
\end{align*}
We call $\ehr_{P}$ the \emph{Ehrhart quasi-polynomial} of $P$.
The \emph{period sequence} of $P$ is $(s_{0}, s_{1}, s_{2})$,
where $s_{i}$ is the minimum period of the coefficient function
$c_{P,i}$ for $i = 0, 1, 2$.  The \emph{quasi-period} of
$\ehr_{P}$ (or of $P$) is $\lcm \setof{s_{0}, s_{1}, s_{2}}$.  We
refer the reader to \cite{BecRob2007} and \cite[Chapter
4]{Sta1997} for thorough introductions to the theory of Ehrhart
quasi-polynomials.

Our goal is to examine the possible periods and values of the
coefficient functions $c_{P,i}$.  The leading coefficient function
$c_{P,2}$ is always a constant equal to the area $\A_{P}$ of $P$.
Furthermore, when $P$ is an \emph{integral} polygon (meaning that
its vertices are all in $\Z^{2}$), $\ehr_{P}$ is simply a
polynomial with $c_{P,0}=1$ and $c_{P,1} = \frac{1}{2} \bnd_{P}$,
where $\bnd_{P}$ is the number of lattice points on the boundary
of $P$.  When $P$ is integral, Pick's formula $\A_{P} = \intr_{P}
+ \frac{1}{2}\bnd_{P} - 1$ determines $\A_{P}$ in terms of
$\bnd_{P}$ and the number $\intr_{P}$ of points in the interior of
$P$ \cite{Pic1899}.  Hence, characterizing the Ehrhart polynomials
of integral polygons amounts to determining the possible numbers
of lattice points in their interiors and on their boundaries.
This was accomplished by P.~R.~Scott in 1976:

\begin{thm}[Scott \cite{Sco1976}%
   ; see also \cite{HaaSch2009}%
   ]
   \label{thm:ScottsTheorem}
   Given non-negative integers $I$ and $b$, there exists an
   integral polygon $P$ such that $(\intr_{P}, \bnd_{P}) = (I, b)$
   if and only if $b \ge 3$ and either $I = 0$, $(I, b) = (1, 9)
   $, or $b \le 2 I + 6$.
\end{thm}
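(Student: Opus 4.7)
The plan is to separate the argument into a sufficiency part (constructing polygons realizing each allowed pair) and a necessity part (proving $b \leq 2I + 6$ with the stated exception). The bound $b \geq 3$ is immediate because every polygon has at least three vertices, and the exceptional pair $(I,b) = (1,9)$ is realized by the triangle with vertices $(0,0)$, $(3,0)$, and $(0,3)$, whose only interior lattice point is $(1,1)$ and whose boundary contains exactly nine lattice points.

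For sufficiency, my plan is to exhibit explicit one- or two-parameter families of integral polygons and verify their Pick-data. For $I = 0$ and $b \geq 3$, the triangle with vertices $(0,0)$, $(b-2,0)$, $(0,1)$ has boundary count $b$ (using $\gcd(b-2,1) = 1$ for the hypotenuse) and area $(b-2)/2$, so Pick's formula forces $I = 0$. For $I \geq 1$ and $b \leq 2I + 6$, I would use trapezoids with vertices $(0,0)$, $(a,0)$, $(a,c)$, $(0,d)$ for appropriate nonnegative integers $a, c, d$; varying the base length $a$ and the side heights $c, d$ independently hits every admissible pair, with a handful of small shapes (such as those with $b = 3$) handled separately using thin triangles. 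Checking that these subfamilies jointly cover the required region of $(I,b)$-space is routine but requires care with the overlaps.

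The content of the theorem is the inequality $b \leq 2I + 6$ for integral polygons with $I \geq 1$ other than the exceptional triangle. My plan is a lattice-width reduction: after an affine transformation in $\Aff$, I may assume that the lattice width of $P$ is attained in the direction $(0,1)$, so that $P$ sits in a horizontal strip $0 \leq y \leq h$ and meets both bounding lines, with $h \geq 2$ since $I \geq 1$. Slicing by the horizontal lines $y = k$ gives, for each $0 \leq k \leq h$, a segment $L_k$ whose real length is a concave function of $k$ by convexity of $P$; the number $\ell_k + 1$ of lattice points in $L_k$ is thus subject to a corresponding near-concavity. Since only $L_0$ and $L_h$ can coincide with edges of a convex polygon lying in this strip, they contribute all of their lattice points to $b$, while each interior slice $L_k$ with $0 < k < h$ contributes $\ell_k - 1$ points to $I$ (its two endpoints contributing to $b$ only when they happen to be lattice points). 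Summing and applying the concavity constraint should yield the desired inequality.

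The hard part will be isolating the exceptional configuration $(I,b) = (1,9)$. A naive row-count tends to produce only the weaker bound $b \leq 2I + 7$, and pinning down the equality case requires showing that every slice $L_k$ attains the maximum allowed lattice-point count; via concavity and the lattice-width normalization, this rigidity should force $P$ to be $\Aff$-equivalent to the triangle with vertices $(0,0)$, $(3,0)$, $(0,3)$. An alternative and perhaps cleaner strategy, following Haase and Schicho \cite{HaaSch2009}, is to induct on $I$ by peeling a boundary lattice point from $P$ whenever possible and invoking Scott's inequality as the inductive hypothesis; the exceptional triangle then emerges naturally as the unique obstruction to one step of the peeling procedure.
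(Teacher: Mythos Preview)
The paper does not prove this theorem at all: Theorem~\ref{thm:ScottsTheorem} is quoted as a known result due to Scott~\cite{Sco1976} (with a pointer to the alternative argument in~\cite{HaaSch2009}) and is used only as background motivation for the paper's own results on nonintegral PIPs. There is therefore no ``paper's own proof'' to compare your proposal against.

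That said, your outline is broadly in the spirit of the original arguments. The sufficiency constructions are standard and correct as far as they go; your triangle for $I=0$ and your exceptional triangle for $(1,9)$ are exactly the usual examples. For necessity, the lattice-width normalization followed by horizontal slicing is essentially Scott's original approach, and you have correctly identified the delicate point: the naive slice count gives only $b \le 2I+7$, and tightening to $2I+6$ (while isolating the $(1,9)$ exception) requires a more careful analysis of when the extremal slice lengths are simultaneously achieved. Your alternative suggestion of an ``onion-peeling'' induction is closer to the Haase--Schicho treatment you cite. Either route can be made to work, but as written your proposal is a plan rather than a proof: the trapezoid family for sufficiency is not specified precisely enough to verify coverage of the full region, and the concavity-and-rigidity step for necessity is asserted rather than carried out.
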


In Figure~\ref{fig:ScottsRegion}, the small squares indicate the
values of $I$ and $b$ that are realized as the number of interior
lattice points and boundary lattice points of some convex integral
polygon.  After a suitable linear transformation using Pick's
Formula, these squares represent all of the Ehrhart polynomials of
integral polygons.

However, not all Ehrhart polynomials of polygons come from
\emph{integral} polygon.  Indeed, the complete characterization of
Ehrhart polynomials of rational polygons, including the
non-integral ones, remains open.  To this end, we define a
\emph{polygonal pseudo-integral polytope}, or \emph{polygonal
PIP}, to be a rational polygon with quasi-period equal to~$1$.
That is, polygonal PIPs are those polygons that share with
integral polygons the property of having a polynomial Ehrhart
quasi-polynomial.  Like integral polygons, polygonal PIPs must
satisfy Pick's Theorem \cite[Theorem~3.1]{MW2005}, so, again, the
problem reduces to finding the possible values of $\intr_{P}$ and
$\bnd_{P}$.  In Section \ref{sec:NonintegralPIPs}, we construct
polygonal PIPs with $\bnd_{P} \in \setof{1,2}$ and $\intr_{P} \ge
1$ arbitrary.  This construction therefore yields infinite
families of Ehrhart polynomials that are not the Ehrhart
polynomials of any integral polytope.  This is our first main
result, which we prove in Section 3.
\begin{thm}[proved on p.~\pageref{proof:BelowScottsRegion}]
   \label{thm:BelowScottsRegion}
   Given integers $I \ge 1$ and $b \in \{1, 2\}$, there exists a
   polygonal PIP $P$ with $(\intr_{P}, \bnd_{P}) = (I, b)$.
   However, there does not exist a polygonal PIP $P$ with
   $\bnd_{P} = 0$ or with $(\intr_{P}, \bnd_{P}) \in \{ (0,1), (0,
   2) \}$.
\end{thm}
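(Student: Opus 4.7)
The plan is to handle the non-existence and existence halves of the statement separately.

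For non-existence, the key input is the introduction's observation that every polygonal PIP satisfies Pick's formula $\A_P = \intr_P + \tfrac{1}{2}\bnd_P - 1$. Substituting $(\intr_P, \bnd_P) = (0,1)$ and $(0,2)$ forces $\A_P = -\tfrac{1}{2}$ and $\A_P = 0$, respectively, neither of which is the area of a two-dimensional polygon; the same test rules out $\bnd_P = 0$ with $\intr_P \le 1$. The remaining case, $\bnd_P = 0$ with $\intr_P \ge 2$, I would handle by combining the PIP condition with Ehrhart--Macdonald reciprocity. For a polygonal PIP, $c_{P,0} \equiv 1$ and, using Pick at $n = 1$, one computes $c_{P,1} \equiv \tfrac{1}{2}\bnd_P$, so $\ehr_P(n) = 1 + \tfrac{1}{2}\bnd_P\, n + \A_P n^2$. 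When $\bnd_P = 0$, this polynomial is even in $n$, and reciprocity gives $\bnd_{nP} = \ehr_P(n) - \ehr_P(-n) = 0$ for every positive integer $n$. But $P$ is rational, so some positive integer multiple $kP$ is integral, and the (three or more) vertices of $kP$ are lattice points on its boundary, forcing $\bnd_{kP} \ge 3$, a contradiction.

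For the existence half, for each $I \ge 1$ and each $b \in \{1,2\}$ I would construct an explicit rational polygon with $(\intr_P, \bnd_P) = (I,b)$ and then verify it is a PIP. A natural template is a long, thin triangle or trapezoid gathering $I$ interior lattice points in a single horizontal strip, with one or two corners shaved from lattice vertices to nearby non-lattice rational vertices so as to cut the boundary lattice count down to exactly $b$ without disturbing the interior points. For $b = 2$ I would retain two adjacent lattice vertices on the $y$-axis (e.g.\ $(0,0)$ and $(0,1)$) and shave only the remote corner; for $b = 1$ I would additionally shave one of those two $y$-axis vertices, replacing it by a rational non-lattice vertex.

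The main obstacle is verifying that each such polygon is actually a PIP, not just a polygon satisfying Pick at $n = 1$: I must show $\bnd_{nP} = b\, n$ for every positive integer $n$. I plan to reduce this to an edge-by-edge count, using the fact that the number of lattice points on a rational segment $ne$ is a $\gcd$-type function of $n$ whose non-linear corrections depend only on the denominators of the endpoints. Arranging the shave depths so that their denominators match the denominators of the adjacent edge vectors should cause these corrections to telescope around the boundary, leaving $\bnd_{nP}$ exactly linear in $n$ with slope $b$. Identifying and satisfying this design constraint uniformly in $I$ is, I expect, the crux of the argument.
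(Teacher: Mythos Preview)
Your non-existence argument is sound and close to the paper's: both rest on Pick's formula for polygonal PIPs (from \cite{MW2005}), and your use of reciprocity to obtain $\bnd_{nP}=0$ is a direct derivation of the special case of the identity $\bnd_{nP}=n\,\bnd_{P}$ that the paper simply quotes from the same source.

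The existence half has a real gap. You plan to certify that your candidate $P$ is a PIP by verifying $\bnd_{nP}=b\,n$ for all $n$, but this condition is necessary and not sufficient. Take $P=\conv\{(0,0),(1,0),(0,\tfrac{1}{2})\}$: one checks $\bnd_{nP}=2n$ for every $n\ge 1$, yet $\ehr_{P}(n)=1+n+\tfrac{1}{4}n^{2}$ for $n$ even and $\ehr_{P}(n)=\tfrac{3}{4}+n+\tfrac{1}{4}n^{2}$ for $n$ odd, so $P$ has period sequence $(2,1,1)$ and is not a PIP. In general, reciprocity gives $\bnd_{nP}=\ehr_{P}(n)-\ehr_{P}(-n)$, so linearity of the boundary count only forces the symmetry conditions $c_{P,0}(n)=c_{P,0}(-n)$ and $c_{P,1}(n)+c_{P,1}(-n)=b$, not constancy of either coefficient. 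Even supplementing with Pick at $n=1$ does not obviously close the gap once the denominator exceeds~$2$. Since your ``shaving'' construction is also left unspecified, the existence argument is incomplete as stated.

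The paper sidesteps this verification problem entirely. For $b=2$ it takes a triangle already shown in \cite{MW2005} to be a PIP and reflects it across the $x$-axis. For $b=1$ it starts from a \emph{semi-open} integral triangle---whose Ehrhart function is a polynomial because it is a signed sum of Ehrhart polynomials of integral polytopes---and then applies a sequence of piecewise affine unimodular maps ($\pZ$-morphisms) to reshape it into a convex rational polygon. These maps are lattice-preserving bijections, so the Ehrhart function is carried along unchanged and the PIP property comes for free, with no edge-by-edge bookkeeping required.
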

\begin{figure}
   \subfloat[]{
      \includegraphics{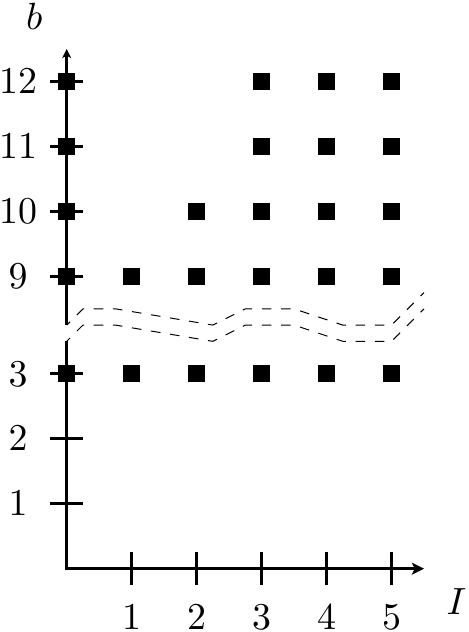}
   } \hfill
   \subfloat[]{
      \includegraphics{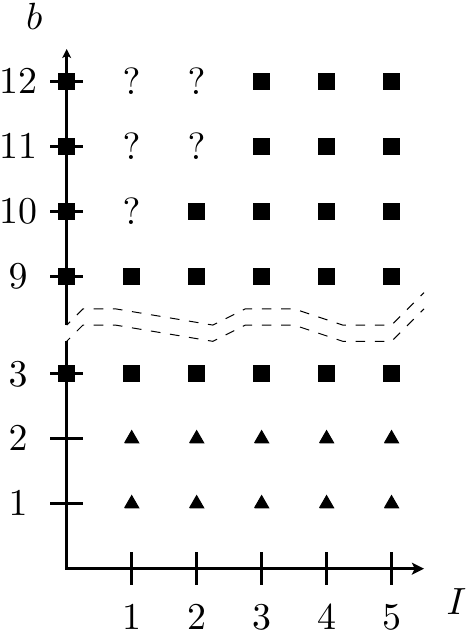}
      \label{fig:ScottRegionAll}
   }
   \caption{%
   On the left: Small squares indicate values of $(I,b)$
   corresponding to convex integral polygons
   (Theorem~\ref{thm:ScottsTheorem}).  On the right: Small
   triangles indicate additional values of $(I,b)$ corresponding
   to nonintegral PIPs (Theorem~\ref{thm:BelowScottsRegion}).
   Question marks indicate values for which the existence of
   corresponding PIPs remains open.
%
   }
   \label{fig:ScottsRegion}
\end{figure}

In Figure~\ref{fig:ScottRegionAll}, the small triangles below the
small squares indicate the values of $I$ and $b$ mentioned in
Theorem~\ref{thm:BelowScottsRegion}.  The question of whether any
points $(I, b)$, $I \ge 1$, above the small squares are realized
by PIPs remains open.

In Section \ref{sec:PeriodsOfCoefficients}, we consider rational
polygons $P$ that are not PIPs\@.  Determining all possible
coefficient functions $c_{P,i}$ seems out of reach at this time.
However, one interesting question we can answer is, What are the
possible period sequences $(s_{0}, s_{1}, s_{2})$?  In
\cite{McM1978}, P.~McMullen gave bounds on the $s_{i}$ in terms of
the \emph{indices} of~$P$.  Given a $d$-dimensional polytope $P$
and $i \in \setof {0, \dotsc, d}$, the \emph{$i$-index} of $P$ is
the least positive integer $\ind{i}$ such that every
$i$-dimensional face of the dilate $\ind{i} P$ contains an integer
lattice point in its affine span.  We call $(\ind{0}, \dotsc,
\ind{d})$ the \emph{index sequence} of $P$.  We state McMullen's
result in the general case of $d$-dimensional polytopes:

\begin{thm}[McMullen \protect{\cite[Theorem 6]{McM1978}}]
\label{thm:McMullenBound}
   Let $P$ be a $d$-dimensional rational polytope with period
   sequence $(s_{0}, \dotsc, s_{d})$ and index sequence $(\ind{0},
   \dotsc, \ind{d})$.  Then $s_{i}$ divides $\ind{i}$ for $0 \le i
   \le d$.  In particular, $s_{i} \le \ind{i}$.
\end{thm}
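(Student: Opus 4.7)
The plan is to reduce the theorem to a face-by-face analysis by stratifying $nP$ as a disjoint union of relative interiors of its faces. Setting
\[
   L_{F^\circ}(n) \deftobe \card{\operatorname{relint}(nF) \cap \Z^d},
\]
this stratification gives $\ehr_P(n) = \sum_{F \le P} L_{F^\circ}(n)$, where each $L_{F^\circ}$ is a quasi-polynomial of degree $\dim F$. Only faces with $\dim F \ge i$ contribute to the $n^i$-coefficient of $\ehr_P$; writing $c_{F^\circ, i}(n)$ for that coefficient, we get
\[
   c_{P, i}(n) = \sum_{F \le P,\, \dim F \ge i} c_{F^\circ, i}(n),
\]
so it suffices to show that each $c_{F^\circ, i}$ has period dividing $\ind{i}$.

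I would first dispose of the case $\dim F = i$, where $c_{F^\circ, i}$ is the leading coefficient of $L_{F^\circ}$. Writing $\operatorname{aff}(F) = v + W$ for a linear subspace $W \subseteq \R^d$, one has $\operatorname{aff}(nF) = nv + W$, and this affine subspace meets $\Z^d$ if and only if the class of $nv$ in $\R^d / (W + \Z^d)$ vanishes. Since $\ind{i} v \in \Z^d + W$ by the definition of the $i$-index, this is a periodic condition on $n$ whose period divides $\ind{i}$. Where the affine span does meet $\Z^d$, translation invariance identifies $L_{F^\circ}(n)$ with the relative-interior Ehrhart quasi-polynomial of $F$ with respect to the induced lattice $W \cap \Z^d$, and its leading coefficient equals $\operatorname{Vol}_i(F)$ divided by the covolume of that lattice, a constant. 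Elsewhere the enumerator vanishes. In either case $c_{F^\circ, i}$ is $\ind{i}$-periodic.

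For a face $F$ with $\dim F = k > i$, I would argue by induction on $k - i$. Ehrhart--Macdonald reciprocity gives $L_{F^\circ}(n) = (-1)^k \ehr_F(-n)$, so it is enough to analyze the $n^i$-coefficient of $\ehr_F$. Viewing $F$ as a full-dimensional rational polytope inside $\operatorname{aff}(F)$ and re-applying the stratification within $F$ expresses $\ehr_F$ as a sum of relative-interior enumerators over faces of $F$; each such face is itself a face of $P$ of dimension strictly less than $k$, and each $i$-dimensional face of $F$ is an $i$-face of $P$ and so inherits the condition $\operatorname{aff}(\ind{i} G) \cap \Z^d \ne \emptyset$. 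The inductive hypothesis then forces the $n^i$-coefficient of each summand to have period dividing $\ind{i}$.

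The main obstacle is the bookkeeping in this induction: one must verify that the translation and lattice-restriction steps used to interpret $\ehr_F$ intrinsically on $\operatorname{aff}(F)$ do not refine the period, and that the index conditions inherited by the faces of $F$ genuinely match the conditions defining $\ind{i}$. Once this is confirmed, $c_{P, i}(n)$ becomes a finite sum of $\ind{i}$-periodic functions, forcing $s_i \mid \ind{i}$ and in particular $s_i \le \ind{i}$.
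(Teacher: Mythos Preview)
The paper does not prove this theorem at all: it is quoted from McMullen's 1978 paper and used as a black box.  So there is no ``paper's own proof'' to compare against; what matters is whether your argument stands on its own.

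Your inductive step has a genuine circularity.  When you re-stratify $\ehr_F$ as $\sum_{G \le F} L_{G^\circ}$, the top face $G = F$ is one of the summands, and it has dimension exactly $k$, not ``strictly less than $k$'' as you claim.  Thus the induction hypothesis does not apply to that term.  If you try to close the loop using reciprocity, you obtain only
\[
   c_{F^\circ,i}(n) = (-1)^{k-i}\Bigl(c_{F^\circ,i}(-n) + S(-n)\Bigr),
\]
where $S$ collects the lower-dimensional contributions and has period dividing $\ind{i}$ by induction.  This functional equation relates $c_{F^\circ,i}(n)$ to $c_{F^\circ,i}(-n)$ but does not by itself force the period of $c_{F^\circ,i}$ to divide $\ind{i}$; depending on the parity of $k-i$ it controls only the even or odd part.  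The same problem recurs if you instead induct on $\dim P$, since the top cell $P$ always feeds back into itself.

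McMullen's actual argument avoids this by going through the valuation-theoretic local formula: one writes $\card{nP \cap \Z^d} = \sum_{F \le P} \mu(F,n)\,\operatorname{relvol}(nF)$, where $\mu(F,n)$ depends only on the normal cone of $F$ and on the class of $\operatorname{aff}(nF)$ modulo $\Z^d$.  Since $\operatorname{relvol}(nF) = n^{\dim F}\operatorname{relvol}(F)$, the coefficient $c_{P,i}(n)$ is then a sum indexed \emph{only} by $i$-dimensional faces, and each $\mu(F,n)$ is visibly periodic with period dividing $\ind{i}$.  This sidesteps the self-reference entirely.  If you want to salvage a stratification argument, you will need an additional ingredient---for instance a half-open decomposition of $P$ into simplices, or a direct comparison of $nP$ with $(n+\ind{i})P$---rather than relying on reciprocity alone to break the loop.
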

We will refer to the inequalities $s_{i} \le \ind{i}$ in Theorem
\ref{thm:McMullenBound} as \emph{McMullen's bounds}.  It is easy
to see that the indices $j_{i}$ of a rational polytope satisfy the
divisibility relations $\ind{d} \mid \ind{d-1} \mid \dotsb \mid
\ind{0}$, and hence $\ind{d} \le \ind{d-1} \le \dotsb \le
\ind{0}$.  Beck, Sam, and Woods \cite{BSW2008} showed that
McMullen's bounds are always tight in the $i \in \setof{d-1, d}$
cases.  It is also shown in \cite{BSW2008} that, given any
positive integers $q_{d} \mid q_{d-1} \mid \dotsb \mid q_{0}$,
there exists a polytope with $i$-index $q_{i}$ for $0 \le i \le
d$.  Moreover, all of McMullen's bounds are tight for this
polytope.

Seeing this, one might hope that the coefficient-periods $s_{i}$
in the period sequence are also required to satisfy some
constraints.  However, our second main result shows that, in the
case of polygons, $s_{0}$ and $s_{1}$ may take on arbitrary
values.  (Of course, we always have $s_{2} = 1$, because the
leading coefficient function $c_{P,2}$ is a constant.)
\begin{thm}[proved on
p.~\pageref{proof:CoefficientPeriodSequences}]
\label{thm:CoefficientPeriodSequences}
   Given positive integers $r$ and $s$, there exists a polygon
   $P$ with period sequence $(r, s, 1)$.
\end{thm}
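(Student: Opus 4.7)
The plan is to construct, for each pair of positive integers $r$ and $s$, an explicit rational polygon $P_{r, s}$ and compute $\ehr_{P_{r,s}}(n) = \A_{P_{r,s}} n^2 + c_1(n) n + c_0(n)$ directly by horizontal slicing, then verify that the coefficient functions $c_0$ and $c_1$ have minimal periods exactly $r$ and $s$. The equality $s_2 = 1$ is automatic, so the real content is in arranging the correct periods for the two lower coefficients.

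As orientation, one already has the triangle $T_r = \conv\{(0, 0), (1, 0), (0, 1/r)\}$ with period sequence $(r, 1, 1)$: a slice sum over integer heights $y = 0, 1, \dotsc, \lfloor n/r \rfloor$ yields
\[
   \ehr_{T_r}(n) = \frac{n^2}{2r} + \left(\frac{1}{2} + \frac{1}{r}\right) n + 1 + \frac{(n \bmod r)(r - 2 - (n \bmod r))}{2r},
\]
so $c_1$ is constant (confirming $s_1 = 1$) while $c_0$ has minimal period exactly $r$. This handles $s = 1$. To force $c_1$ to be nonconstant, some edge of $P_{r, s}$ must lie on an affine line containing no lattice point; the simplest way is to place an edge on the line $y = k/s$ with $\gcd(k, s) = 1$, which makes the $1$-index $j_1 = s$ and therefore $s_1 \mid s$ by Theorem~\ref{thm:McMullenBound}.

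For general $r, s \ge 2$, I would take $P_{r, s}$ to be a polygon whose vertices have denominators dividing $\lcm(r, s)$ and which contains exactly one edge on a line $y = k/s$ with $\gcd(k, s) = 1$. A natural starting point is the parallelogram with side vectors $(1, 0)$ and $(1/r, 1/s)$, which an explicit slicing computation shows has period sequence $(\lcm(r, s), s, 1)$ (in the coprime case $\gcd(r, s) = 1$). I would then modify this parallelogram by attaching or truncating a carefully chosen small rational region, forming a pentagon or hexagon, so that the new vertex contributions to $c_0$ cancel the unwanted $(n \bmod s)$-dependence, leaving $c_0$ as a function of $n \bmod r$ alone. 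After expanding the full slice sum
\[
   \ehr_{P_{r, s}}(n) = \sum_{y \,\in\, \Z \,\cap\, [y_{\min}(n), y_{\max}(n)]} \#\{x \in \Z : (x, y) \in n P_{r, s}\}
\]
as a quasi-polynomial in $n$ and reading off $c_0$ and $c_1$, the verification of minimality reduces to exhibiting $r$ distinct values of $c_0$ over a period, and $s$ distinct values of $c_1$.

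The main obstacle is engineering the quasi-period collapse of $c_0$ from the naive period $\lcm(r, s)$ down to exactly $r$ while simultaneously preserving the period $s$ in $c_1$. Every symmetric family of polygons I have tried (axis-aligned rectangles $[0, 1/r] \times [0, 1/s]$, plain parallelograms, isoceles trapezoids with top edge on $y = 1/s$, and hexagons formed by stacking two parallelograms) yields period sequences in which $c_0$ and $c_1$ share a common period equal to $s$ or $\lcm(r, s)$, reflecting an unwanted coupling between vertex and edge denominators. Breaking this coupling requires an asymmetric construction, and the quasi-period collapse techniques developed in the earlier sections of the paper should supply the necessary leverage.
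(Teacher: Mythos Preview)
Your proposal is not a proof: you correctly identify the obstacle (decoupling the period of $c_0$ from that of $c_1$) and correctly dispose of the $s=1$ case with the triangle $T_r$, but the general case is left as a hope that ``the quasi-period collapse techniques \dots\ should supply the necessary leverage,'' with no concrete construction. Every polygon you actually tried failed, and you say so.

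The paper's proof supplies precisely the missing idea, and it is structural rather than computational. Instead of building a single polygon carrying both periods and trying to force cancellation in $c_0$, one builds \emph{two} polygons and glues them along a unit lattice segment:
\begin{itemize}
   \item a triangle $Q$ unimodularly equivalent to your $T_r$, with period sequence $(r,1,1)$;
   \item a heptagon $H$ with period sequence $(1,s,1)$, i.e.\ with $c_{H,0}\equiv 1$ but $c_{H,1}$ of minimal period $s$.
\end{itemize}
Since $H\cap Q$ is an integral segment, $\ehr_{H\cup Q}=\ehr_H+\ehr_Q-\ehr_{[0,1]}$, and the period sequence of the union is $(r,s,1)$ on the nose. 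The whole difficulty is thus pushed into the construction of $H$.

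That construction is where the $\pZ$-morphisms of Section~\ref{sec:PiecewiseSkewUnimodularTransformations} actually enter, and it is the step your plan is missing. The heptagon $H$ is designed so that two of its triangular pieces can be sheared (by $U^{\pm}_{uw}$) onto a common region; after this rearrangement one obtains $\ehr_H=\ehr_{H'}+\ehr_h$, where $H'$ is a thin rectangle $[-\tfrac1s,0]\times[\text{integers}]$ glued to an \emph{integral} triangle along a lattice segment, and $h$ is a half-open segment unimodularly equivalent to $(\tfrac1s,1]$. Lemma~\ref{lem:CoeffsofRectangleUnionIntegral} gives $c_{H',0}=c_{\ell,0}$ (with $\ell=[0,\tfrac1s]$), and since $c_{\ell,0}+c_{h,0}=1$, the constant term $c_{H,0}$ collapses to the constant $1$ while $c_{H,1}$ retains minimal period $s$.

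Your direct-slicing approach is not wrong in principle, but without the ``build $(1,s,1)$ and $(r,1,1)$ separately, then glue'' decomposition you are trying to engineer two independent cancellations inside a single polygon, which is why all your symmetric families produced coupled periods.
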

%
Thus, in constrast to the Beck--Sam--Woods construction, the
McMullen bound $s_{0} \le \ind{0}$ can be arbitrarily far from
tight.

We prove Theorem \ref{thm:CoefficientPeriodSequences} in Section
\ref{sec:PeriodsOfCoefficients}.  Before giving proofs of Theorems
\ref{thm:BelowScottsRegion}
and~\ref{thm:CoefficientPeriodSequences}, we define in
Section~\ref{sec:PiecewiseSkewUnimodularTransformations} some
notation and terminology that we will use in our constructions.

\section{%
   Piecewise skew unimodular transformations
}%
\label{sec:PiecewiseSkewUnimodularTransformations}

Since we will be exploring the possible Ehrhart quasi-polynomials
of polygons, it will be useful to have geometric tools for
constructing rational polygons while controlling their Ehrhart
quasi-polynomials.  The main tool that we will use are piecewise
affine unimodular transformations.  Following~\cite{Gre1993}, we
call these \emph{$\pZ$-morphisms}.
\begin{defn}
   Given $U, V \subset \R^{2}$ and a finite set $\setof{\ell_{i}}$
   of lines in the plane, let $\mathcal{C}$ be the set of
   connected components of $U \setminus \bigcup_{i} \ell_{i}$.  An
   injective continuous map $f\maps U \to V$ is a
   \emph{$\pZ$-morphism} if, for each component $C \in
   \mathcal{C}$, $f \vert_{C}$ is the restriction to $C$ of an
   affine transformation, and $f\vert_{C \cap \Z^{2}}$ is the
   restiction to $C \cap \Z^{2}$ of an affine automorphism of the
   lattice.  (That is, $f\vert_{C}$ can be extended to an element
   of $\GL_{2}(\Z) \ltimes \Z^{2}$).
\end{defn}
Thus, $\pZ$-morphisms are piecewise affine linear maps that map
lattice points, and only lattice points, to lattice points.  The
key property of $\pZ$-morphisms is that they preserve the lattice
and so preserve Ehrhart quasi-polynomials.

We will only need $\pZ$-morphisms that act as skew transformations
on each component of their domains.  It will be convenient to
introduce some notation for these transformations.  Given a
rational vector $r \in \Q^{2}$, let $r_{p}$ be the generator of
the semigroup $(\R_{\ge 0}r) \cap \Z^{2}$, and define the
\emph{lattice length} $\latlen(r)$ of $r$ by $\latlen(r) r_{p} =
r$.  Thus, if $r = (\tfrac{a}{b}, \tfrac{c}{d})$, where the
fractions are reduced, we have that $\latlen(r) = \gcd(a, c) /
\lcm(b, d)$.  Define the skew unimodular transformation $U_{r} \in
\SL_{2}(\Z)$ by
\begin{equation*}
   U_{r} (x) 
   =  x 
      +  \frac{1}{\latlen(r)^{2}}
         \det(r, x) r,
\end{equation*}
where $\det(r,x)$ is the determinant of the matrix whose columns
are $r$ and $x$ (in that order).  Equivalently, let $S$ be the
subgroup of skew transformations in $\SL_{2}(\Z)$ that fix $r$,
and let $U_{r}$ be the generator of $S$ that translates a vector
$v$ parallel (resp.\ anti-parallel) to $r$ if the angle between
$r$ and $v$ is less than (resp.\ greater than) $\pi$, measured
counterclockwise about the origin.

Define the piecewise unimodular transformations $U_{r}^{+}$ and
$U_{r}^{-}$ by
\begin{equation*}
   U_{r}^{+}(x)
   =  \begin{dcases*}
         U_{r}(x) & if $\det(r,x) \geq 0$,  \\
         x & else,
      \end{dcases*}
\end{equation*}
and
\begin{equation*}
   U_{r}^{-}(x)
   =
   \begin{dcases*}
      x & if $\det(r, x) \ge 0$, \\
      \smash{(U_{r})^{-1}(x)} & else.
   \end{dcases*}
\end{equation*}
Finally, given a lattice point $u \in \Z^{2}$ and a rational
point $v \in \Q^{2}$, let $U_{uv}^{+}$ and $U_{uv}^{-}$ be the
\emph{affine} piecewise unimodular transformations defined by
\begin{align*}
   U_{uv}^{+}(x)
   &= U_{v - u}^{+}(x - u) + u,  \\
   U_{uv}^{-}(x) 
   &= U_{v - u}^{-}(x - u) + u.
\end{align*}

\section{%
    Constructing nonintegral PIPs%
}%
\label{sec:NonintegralPIPs}

We recall our notation from the introduction.  Given a polygon $P
\subset \R^{2}$, let $\A_{P}$ be the area of $P$, and let
$\intr_{P}$ and $\bnd_{P}$ be the number of lattice points in the
interior and on the boundary of $P$, respectively.  We now prove 
Theorem \ref{thm:BelowScottsRegion}, which we restate here for 
the convenience of the reader.


\begin{thmBelowScottsRegion}
   Given integers $I \ge 1$ and $b \in \{1, 2\}$, there exists a
   polygonal PIP $P$ with $(\intr_{P}, \bnd_{P}) = (I, b)$.
   However, there does not exist a polygonal PIP $P$ with
   $\bnd_{P} = 0$ or with $(\intr_{P}, \bnd_{P}) \in \{ (0,1), (0,
   2) \}$.
\end{thmBelowScottsRegion}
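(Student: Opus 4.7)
The plan is to handle the nonexistence and existence parts of the statement separately.

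For the nonexistence claims, I would invoke Pick's formula (which holds for polygonal PIPs) together with Ehrhart--Macdonald reciprocity. When $(\intr_P, \bnd_P) = (0, 1)$ or $(0, 2)$, Pick's formula would force $\A_P = -\tfrac{1}{2}$ or $\A_P = 0$, respectively---neither a valid area for a $2$-dimensional polygon. For the case $\bnd_P = 0$, write the Ehrhart polynomial of the PIP as $\ehr_P(n) = 1 + c_{P,1} n + \A_P n^2$. Since this is a genuine polynomial, Ehrhart--Macdonald reciprocity gives $\ehr_{P^\circ}(n) = \ehr_P(-n)$, so subtracting,
\begin{equation*}
   \bnd_{nP}
   = \ehr_P(n) - \ehr_{P^\circ}(n)
   = 2 c_{P,1} n = \bnd_P \cdot n,
\end{equation*}
where the last equality uses $c_{P,1} = \tfrac{1}{2}\bnd_P$ from Pick. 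If $\bnd_P = 0$, this forces $\bnd_{nP} = 0$ for every positive integer $n$. But taking $n$ to be a common denominator of the coordinates of the vertices of $P$ makes $nP$ an integer polygon with $\bnd_{nP} \ge 3$, a contradiction.

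For the existence part, observe that $\pZ$-morphisms preserve the Ehrhart quasi-polynomial and, via Pick, the pair $(\intr, \bnd)$; combined with Scott's theorem, this rules out obtaining any PIP with $\bnd \in \{1, 2\}$ as a $\pZ$-morphic image of an integer polygon. The construction must therefore be direct. The model case is the triangle $\conv\setof{(-\tfrac{1}{2}, 0), (\tfrac{1}{2}, 0), (0, 2)}$, which realizes $(\intr_P, \bnd_P) = (1, 2)$ with area $1$ and Ehrhart function $1 + n + n^2$, as one can verify slice by slice. I would write down analogous families of rational triangles for every $(I, b)$ with $I \ge 1$ and $b \in \{1, 2\}$, tuning the vertex coordinates so that (i) exactly the prescribed $b$ lattice points lie on the boundary (at a vertex or on the relative interior of an edge), (ii) every edge's affine span passes through a lattice point (so that the $1$-index equals $1$, a necessary condition for polynomiality of $c_{P,1}$), and (iii) $\card{nP \cap \Z^2} = 1 + \tfrac{b}{2} n + (I + \tfrac{b}{2} - 1) n^2$ for every positive integer $n$.

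The main obstacle is verifying condition~(iii) uniformly in $n$: a generic rational triangle has a strictly quasi-polynomial Ehrhart function because the horizontal slice counts of $nP$ contain floor-function fluctuations that do not cancel. My approach is to use the piecewise skew unimodular transformations $U_{uv}^{\pm}$ of Section~\ref{sec:PiecewiseSkewUnimodularTransformations}: by cutting each candidate polygon $P$ along a line through a lattice point in a rational direction and applying $U_{uv}^{\pm}$ to one of the two resulting pieces, I would exhibit $P$ as piecewise $\SL_2(\Z) \ltimes \Z^2$-equivalent to a simpler auxiliary polygon $P'$ whose Ehrhart polynomial is transparent (for example, a polygon whose slices decompose into lattice translates of a fundamental parallelogram). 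Since $\pZ$-morphisms preserve the Ehrhart quasi-polynomial, the polynomiality of $\ehr_P$ transfers from that of $\ehr_{P'}$. The remaining checks---that $P$ is convex and has the predicted $(\intr, \bnd)$ profile---are routine combinatorial verifications from the vertex coordinates.
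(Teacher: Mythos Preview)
Your nonexistence argument is essentially the paper's: Pick's formula (valid for PIPs by \cite{MW2005}) kills $(0,1)$ and $(0,2)$, and the identity $\bnd_{nP}=n\,\bnd_P$ kills $\bnd_P=0$. The paper cites this identity from \cite{MW2005}; you rederive it via Ehrhart--Macdonald reciprocity, which is fine (and in fact your reciprocity computation already gives $c_{P,1}=\tfrac12\bnd_P$ at $n=1$, so the appeal to Pick there is unnecessary).

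The existence part, however, is only a plan, and the plan has a gap. Your model triangle for $(I,b)=(1,2)$ is correct, but it does not generalize in the way you suggest: the obvious family $\conv\{(-\tfrac12,0),(\tfrac12,0),(0,h)\}$ always has $\intr_P=h-1$ and $\A_P=h/2$, so Pick forces $h=2$ and $I=1$ only. More seriously, your proposed direction---start with the unknown rational target $P$ and simplify via $\pZ$-morphisms to some $P'$ with transparent Ehrhart polynomial---runs into the very obstruction you noted: $\pZ$-morphisms preserve $(\intr,\bnd)$, so $P'$ would still have $\bnd_{P'}\in\{1,2\}$ and hence (by Scott) could not be a closed convex integral polygon. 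You have not said what kind of region $P'$ would be, and ``slices decompose into lattice translates of a fundamental parallelogram'' does not pin anything down.

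The paper resolves this by running the $\pZ$-morphism argument in the \emph{opposite} direction and, crucially, by allowing non-closed starting regions. For $b=2$ it takes the triangle $T=\conv\{(0,0),(I{+}1,0),(1,1-\tfrac1{I{+}1})\}$, already known to be a PIP, and reflects it across the $x$-axis; the union is a convex quadrilateral PIP with $(\intr,\bnd)=(I,2)$. For $b=1$ it begins with a \emph{semi-open} integral triangle $T_1$ (one edge deleted), whose Ehrhart function is a signed sum of Ehrhart polynomials of integral polytopes and hence polynomial, and then applies an explicit sequence of $\pZ$-morphisms $U^{\pm}_{uv}$ to close it up into a convex rational polygon with a single boundary lattice point. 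The semi-open trick is what lets you start from something with integer vertices yet $\bnd=1$, sidestepping Scott's inequality; your proposal does not contain this idea.
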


\begin{proof}
   \label{proof:BelowScottsRegion}
   Let integers $b \in \setof{1,2}$ and $I \ge 1$ be given.  We
   construct a polygonal PIP $P$ with $(\intr_{P}, \bnd_{P}) =
   (I,b)$.
   
   If $b = 2$, consider the triangle \[T = \conv
   \setof{(0,0)^{\tran}, (I+1, 0)^{\tran}, (1, 1 -
   \tfrac{1}{I+1})^{\tran}}.\] It was proved in \cite{MW2005}
   that $T$ is a PIP\@.  Let $P$ be the union of $T$ and its
   reflection about the $x$-axis.  Then $\intr_{P} = I$ and
   $\bnd_{P} = 2$.  Moreover, $\ehr_{P}(n) = 2\ehr_{T}(n) - (I +
   2)$ (correcting for points double-counted on the $x$-axis),
   so $P$ is also a PIP.
   
   If $b = 1$, consider the ``semi-open'' triangle 
   \begin{align*}
      T_{1} &= \conv\setof{(0,0)^{\tran}, (1, 2I - 1)^{\tran},
      (-1, 0)} \setminus \bigl( (0,0)^{\tran}, (1, 2I - 1)^{\tran}
      \bigr].
   \end{align*}
   (See Figure \ref{fig:OneVertex-a} for the case with $I = 3$.)
   The Ehrhart quasi-polynomial of $T$ is evidently a signed sum
   of Ehrhart polynomials of integral polytopes, so it too is a
   polynomial.  We will apply a succession of $\pZ$-morphisms
   to $T$ to produce a convex rational polygon without changing
   the Ehrhart polynomial.
   
   \begin{figure}[tbp]
      \def\JPicScale{0.33}
      \hspace*{\fill}%
      \subfloat[]{%
         \includegraphics{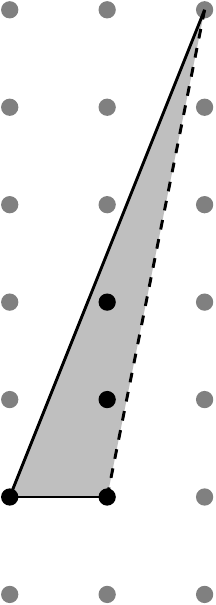}%
         \label{fig:OneVertex-a}%
      }%
      \hfill%
      \subfloat[]{%
         \includegraphics{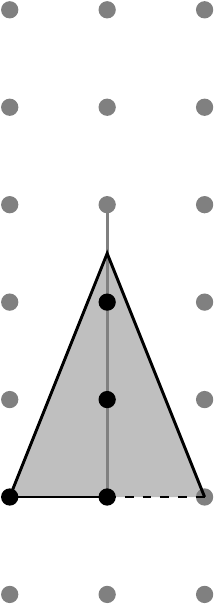}%
         \label{fig:OneVertex-b}%
      }%
      \hfill%
      \subfloat[]{%
         \includegraphics{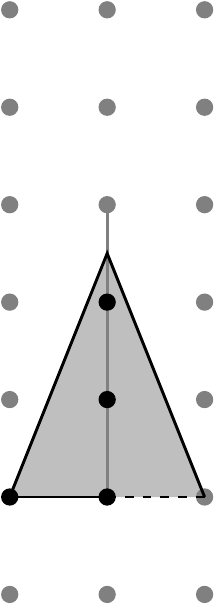}%
         \label{fig:OneVertex-c}%
      }%
      \hspace*{\fill}%
      \caption{%
         The construction of a polygonal PIP with one boundary
         point and an arbitrary number $I$ of interior points in
         the case $I = 3$.  Black points are elements of the
         region.  Gray line segments indicate the lines fixed by
         the skew transformations.
      }%
      \label{fig:OneVertex}
   \end{figure}
   
   Let $T_{2} = (U_{(0,-1)^{\tran}}^{+})^{2I - 1}(T_{1})$.  (See
   Figure \ref{fig:OneVertex-b}.  The gray line segment indicates
   the line fixed by the skew transformation.)  Hence,
   \[T_{2} = \conv \setof{(1,0)^{\tran}, (0, I - 1/2)^{\tran},
   (-1,0)^{\tran}} \setminus \bigl((0,0)^{\tran},
   (1,0)^{\tran}\bigr].\]
   Now act upon the triangle below the line spanned by $(-1,-1)$
   (resp.\ $(1,-1)$), with $U_{(-1,-1)^{\tran}}^{+}$ (resp.\
   $U_{(1,-1)^{\tran}}^{-}$).  (See
   Figure~\ref{fig:OneVertex}\subref*{fig:OneVertex-c}.  The line
   segments meeting at the origin lie on the lines fixed by one of
   these unimodular transformations.)  The result is
   \begin{equation*}
      T_{3} = \conv \left\lbrace
      \begin{pmatrix}
         0  \\
         -1
      \end{pmatrix},\;
      \begin{pmatrix}
         \dfrac{2I - 1}{2I + 1}  \\
         \\
         \dfrac{2I - 1}{2I + 1}
      \end{pmatrix},\;
      \begin{pmatrix}
         0  \\
         I - 1/2
      \end{pmatrix},\;
      \begin{pmatrix}
         -\dfrac{2I - 1}{2I + 1}  \\
         \\
         \dfrac{2I - 1}{2I + 1}
      \end{pmatrix}
    \right \rbrace.
   \end{equation*}
   At this point, we have a
   convex rational polygon with the desired number of interior and
   boundary points, so the claim is proved.  However, it might be
   noted that we can achieve a triangle by letting $P =
   (U_{(0,1)^{\tran}}^{-})^{^{2I - 1}}(T_{3})$, yielding
   \begin{equation*}
      P
      =  \conv
         \setof{
            \begin{pmatrix}
               0  \\
               -1
            \end{pmatrix},\;
            \begin{pmatrix}
               \dfrac{2I - 1}{2I + 1}  \\
               \\
               2I\, \dfrac{2I - 1}{2I + 1}
            \end{pmatrix},\;
            \begin{pmatrix}
               -\dfrac{2I - 1}{2I + 1}  \\
               \\
               \dfrac{2I - 1}{2I + 1}
            \end{pmatrix}
          }. \qedhere
    \end{equation*}
    
    To prove the nonexistence claim, let a polygonal PIP $P$ be
    give.  In \cite[Theorem 3.1]{MW2005}, it was shown that
    $\bnd_{nP} = n \bnd_{P}$ for $n \in \Z_{>0}$.  If $\bnd_{P} =
    0$, this implies that $\bnd_{nP} = 0$ for all $n \in \Z_{>0}$,
    which is impossible because, for example, some integral dilate
    of $P$ is integral.  Hence, $\bnd_{P} \ge 1$.
     
    It was also shown in \cite{MW2005} that polygonal PIPs satisfy
    Pick's theorem: $\A_{P} = \intr_{P} + \frac{1}{2} \bnd_{P} -
    1$.  But if $\intr_{P} = 0$ and $\bnd_{P} \in \{1, 2\}$, this
    yields $\A_{P} \le 0$.  Since our polygons are not contained in
    a line by definition, this is impossible.  Therefore, if
    $\bnd_{P} < 3$, we must have $\bnd_{P} \in \{1, 2\}$ and
    $\intr_{P} \ge 1$.
\end{proof}

A proof of, or counterexample to, Scott's inequality $\bnd_{P} \le
2 \intr_{P} + 7$ for nonintegral polygonal PIPs with interior
points eludes us.  (See the question marks in
Figure~\ref{fig:ScottRegionAll}.)  However, it is easy to show
that any counterexample $P$ cannot contain a lattice point in the
interior of its integral hull $\inhl{P} \deftobe \conv (P \cap
\Z^{2})$.  Indeed, the proof does not even require the hypothesis
that $P$ is a PIP.

\begin{prop}
   If $P$ is a polygon whose integral hull contains a lattice
   point in its interior, then either $(\intr_{P}, \bnd_{P}) = (1,
   9)$ or $\bnd_{P} \le 2 \intr_{P} + 6$.
\end{prop}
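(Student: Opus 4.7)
The plan is to reduce to Scott's theorem by applying it to the integral hull $\inhl{P}=\conv(P\cap\Z^{2})$ and then transferring the resulting inequality back to $P$. Since by hypothesis $\inhl{P}$ contains a lattice point in its interior, $\inhl{P}$ is a genuine $2$-dimensional integral polygon with $\intr_{\inhl{P}}\ge 1$, so Theorem \ref{thm:ScottsTheorem} applies and gives either $(\intr_{\inhl{P}},\bnd_{\inhl{P}})=(1,9)$ or $\bnd_{\inhl{P}}\le 2\intr_{\inhl{P}}+6$.

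Next I would compare the lattice-point data of $P$ with that of $\inhl{P}$. Because $\inhl{P}\subseteq P$ and $P\cap\Z^{2}=\inhl{P}\cap\Z^{2}$, the total lattice-point counts agree: $\intr_{P}+\bnd_{P}=\intr_{\inhl{P}}+\bnd_{\inhl{P}}$. Moreover, any interior lattice point of $\inhl{P}$ lies in the open set $\mathrm{int}(\inhl{P})\subseteq\mathrm{int}(P)$, and any boundary lattice point of $P$ must be a boundary lattice point of $\inhl{P}$ (otherwise a small neighborhood of it in $\inhl{P}\subseteq P$ would witness that it is interior to $P$, contradicting the boundary condition). Consequently $\intr_{P}\ge\intr_{\inhl{P}}$ and $\bnd_{P}\le\bnd_{\inhl{P}}$, and setting $k\deftobe\bnd_{\inhl{P}}-\bnd_{P}=\intr_{P}-\intr_{\inhl{P}}\ge 0$ encodes exactly how many boundary lattice points of $\inhl{P}$ have migrated to the interior of $P$.

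Finally, I would split on the two Scott cases. If $\bnd_{\inhl{P}}\le 2\intr_{\inhl{P}}+6$, substitute to obtain
\begin{equation*}
   \bnd_{P} = \bnd_{\inhl{P}}-k \le 2\intr_{\inhl{P}}+6-k = 2(\intr_{P}-k)+6-k = 2\intr_{P}+6-3k \le 2\intr_{P}+6.
\end{equation*}
If instead $(\intr_{\inhl{P}},\bnd_{\inhl{P}})=(1,9)$, then $(\intr_{P},\bnd_{P})=(1+k,9-k)$; when $k=0$ we land in the exceptional case $(1,9)$, and when $k\ge 1$ the inequality $9-k\le 8+2k=2(1+k)+6$ is immediate, so $\bnd_{P}\le 2\intr_{P}+6$ as desired.

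There is no serious obstacle here: the whole argument is a reduction to Scott's theorem for $\inhl{P}$, and the only nontrivial input is the geometric observation that passing from $\inhl{P}$ to $P$ can only convert boundary lattice points into interior lattice points (never create, destroy, or move them in the other direction). Getting that containment direction and the conservation $\intr_{P}+\bnd_{P}=\intr_{\inhl{P}}+\bnd_{\inhl{P}}$ right is the only place where care is required.
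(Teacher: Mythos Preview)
Your proof is correct and follows essentially the same approach as the paper: apply Scott's theorem to the integral hull $\inhl{P}$ and use the comparison inequalities $\bnd_{P}\le\bnd_{\inhl{P}}$, $\intr_{P}\ge\intr_{\inhl{P}}$ to transfer the conclusion back to $P$. Your introduction of the parameter $k$ and the conservation identity $\intr_{P}+\bnd_{P}=\intr_{\inhl{P}}+\bnd_{\inhl{P}}$ makes the bookkeeping more explicit (and even yields the slightly sharper $\bnd_{P}\le 2\intr_{P}+6-3k$), but the underlying argument is the same.
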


\begin{proof}
   We are given that $\intr_{\inhl{P}} \ge 1$.  Note that
   $\bnd_{\inhl{P}} \ge \bnd_{P}$ and $\intr_{\inhl{P}} \le
   \intr_{P}$.  Since $\inhl{P}$ is an integral polygon, it obeys
   Scott's inequality: $\bnd_{\inhl{P}} \le 2 \intr_{\inhl{P}} +
   6$ unless $(\intr_{\inhl{P}}, \bnd_{\inhl{P}}) = (1, 9)$.  In
   the former case, we have
   $%
   \bnd_{P}
   \le \bnd_{\inhl{P}}
   \le 2 \intr_{\inhl{P}} + 6 
   \le 2 \intr_{P} + 6%
   $.  In the latter case, we similarly have $\bnd_{P} \le
   \bnd_{\inhl{P}} = 9$ and $1 = \intr_{\inhl{P}} \le \intr_{P} $,
   so either $\intr_{P} = 1$ or $\bnd_{P} \le 2 \intr_{P} + 6$.
\end{proof}

\section{%
   Periods of coefficients of Ehrhart Quasi-polynomials%
}%
\label{sec:PeriodsOfCoefficients}

If $P$ is a rational polygon, then the coefficient of the leading
term of $\ehr_{P}$ is the area of $P$, so the ``quadratic'' term
in the period sequence of $P$ is always~$1$.  However, we show
below that no constraints apply to the remaining terms in the
period sequence.  This is our Theorem 
\ref{thm:CoefficientPeriodSequences}, which we restate here for 
the convenience of the reader.
\begin{thmCoefficientPeriodSequences}
   Given positive integers $r$ and $s$, there exists a polygon
   $P$ with period sequence $(r, s, 1)$.
\end{thmCoefficientPeriodSequences}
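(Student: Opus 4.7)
To prove Theorem~\ref{thm:CoefficientPeriodSequences}, I would follow the same pattern as in the proof of Theorem~\ref{thm:BelowScottsRegion}: construct a tractable rational model polygon (possibly non-convex or semi-open) whose Ehrhart quasi-polynomial can be computed in closed form and has the required period sequence, then, if needed, apply a sequence of $\pZ$-morphisms from Section~\ref{sec:PiecewiseSkewUnimodularTransformations} to reshape it into a convex rational polygon without changing the Ehrhart quasi-polynomial.

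For the explicit construction, I would try a rational polygon $P = P_{r,s}$ whose vertex and edge data encode $r$ and $s$ independently: the vertex-residue contributions making up $c_{P,0}(n)$ should depend only on $n\bmod r$, while the edge-lattice-length contributions making up $c_{P,1}(n)$ should depend only on $n\bmod s$. A natural family to try is a trapezoid of the form $\conv\{(0,0),(A,0),(A,B/s),(0,C/r)\}$ for suitably chosen integers $A,B,C$, where the two vertical sides have different rational heights so that the denominators $r$ and $s$ enter the vertex data independently. Computing $\ehr_P(n)$ by summing over horizontal rows reduces the count to floor-function sums of the form $\sum_y \lfloor\alpha y+\beta\rfloor$, which split into a polynomial-in-$n$ piece plus fractional-part corrections; grouping by powers of $n$ and tracking the $n\bmod r$- and $n\bmod s$-dependent pieces separately produces closed-form expressions for $c_{P,0}$, $c_{P,1}$, and the constant $c_{P,2} = \A_P$.

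With the closed form in hand, I would verify the period claim in two steps. The divisibility part — that the minimum period of $c_{P,i}$ divides the target value — follows from McMullen's bounds (Theorem~\ref{thm:McMullenBound}) once the polygon is chosen so that $\ind{0} = r$ and $\ind{1} = s$. Sharpness — that the minimum periods are exactly $r$ and $s$ rather than proper divisors — would be verified by exhibiting, for each proper divisor $d<r$ of $r$, two values $n,n+d$ at which the closed-form expression gives $c_{P,0}(n)\neq c_{P,0}(n+d)$, and analogously for $c_{P,1}$ with respect to $s$.

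The main obstacle I expect is decoupling the two periods. The most naive candidates fail: for instance, the rectangle $[0,1/r]\times[0,1/s]$ has $\ehr(n)=(\lfloor n/r\rfloor+1)(\lfloor n/s\rfloor+1)$, and direct computation shows that both $c_{P,0}$ and $c_{P,1}$ end up with period $\lcm(r,s)$, because the pair $(n\bmod r,\,n\bmod s)$ enters both coefficients symmetrically. A successful construction must break this symmetry, arranging the geometry so that the $r$-periodic residue contributions concentrate in $c_{P,0}$ and the $s$-periodic edge contributions concentrate in $c_{P,1}$. If no convex polygon achieves this decoupling directly, the fallback is to build a non-convex or semi-open gadget first — where the signed-sum decomposition into integer polygons makes the $r$- and $s$-contributions transparent — and then convexify via $\pZ$-morphisms, exactly as the triangle $T_1$ is massaged into $T_3$ and finally $P$ in the proof of Theorem~\ref{thm:BelowScottsRegion}; this step is justified by the lattice-preservation property of $\pZ$-morphisms established in Section~\ref{sec:PiecewiseSkewUnimodularTransformations}.
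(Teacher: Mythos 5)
Your proposal correctly identifies the crux --- decoupling the $r$-periodicity of $c_{P,0}$ from the $s$-periodicity of $c_{P,1}$ --- but it does not actually resolve it, and the one concrete mechanism you offer for the ``divisibility'' half is broken. You propose to get $s_0 \mid r$ and $s_1 \mid s$ from McMullen's bounds by choosing the polygon so that $\ind{0} = r$ and $\ind{1} = s$. But the paper itself records that the indices satisfy $\ind{2} \mid \ind{1} \mid \ind{0}$, so $\ind{0} = r$ and $\ind{1} = s$ is possible only when $s \mid r$; for, say, $r = 2$, $s = 3$ no such polygon exists. Worse, in \emph{any} polygon with $s_1 = s$ the edges (hence the vertices) must carry denominator $s$, so $\ind{0}$ is forced to be a multiple of $\lcm(r,s)$, and McMullen then only gives $s_0 \mid \ind{0}$, which says nothing like $s_0 \mid r$. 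Showing that the constant coefficient nevertheless has small period --- quasi-period collapse in degree $0$ --- is exactly the content of the theorem and cannot be outsourced to Theorem~\ref{thm:McMullenBound}. Your candidate trapezoid $\conv\{(0,0),(A,0),(A,B/s),(0,C/r)\}$ has a slanted top edge whose data mixes $r$ and $s$, so (as you yourself observe for the rectangle $[0,1/r]\times[0,1/s]$) there is no reason its coefficients decouple; the ``fallback'' of a semi-open gadget plus $\pZ$-morphisms is the right instinct but is left entirely unspecified, so the proof has no working construction in it.

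For comparison, the paper sidesteps the decoupling problem by making it modular: it builds a heptagon $H$ with period sequence $(1,s,1)$ and a triangle $Q$ with period sequence $(r,1,1)$, then glues them along a unit lattice segment so that $\ehr_{P} = \ehr_{H} + \ehr_{Q} - \ehr_{[0,1]}$ and the two periods land in different coefficients by construction. The delicate point --- that $c_{H,0}$ has period $1$ even though $H$ has vertices with denominator $s$ --- is proved by cutting $H$ into a thin rectangle of width $1/s$ plus three triangles, rearranging the triangles by skew unimodular maps into a single integral triangle meeting the rectangle in a lattice segment, and accounting for the one half-open overlap segment via $c_{\ell,0} + c_{h,0} = 1$. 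The triangle $Q$ gets $s_0 = r$ exactly from a generating-function argument. If you want to salvage your approach, you would need to supply an analogue of that rearrangement argument; as written, the proposal is a plan with the essential step missing.
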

  
Before proceeding to the proof, we make some elementary
observations regarding the coefficients of certain Ehrhart
quasi-polynomials.

Fix a positive integer $s$, and let $\ell$ be the line segment
$[0, \tfrac{1}{s}]$.  Then we have that $\ehr_{\ell}(n) =
\tfrac{1}{s} n + c_{\ell,0}(n)$, where the ``constant''
coefficient function $c_{\ell,0}(n) = \floors{n/s} - n/s + 1$ has
minimum period $s$.  Note also that the half-open interval $h
\deftobe (\tfrac{1}{s}, 1]$ satisfies $\ehr_{\ell} + \ehr_{h} =
\ehr_{[0,1]}$.  In particular, we have that
\begin{equation}
   c_{\ell,0} + c_{h,0} = 1.
   \label{eq:ConstantCoeffofLineSegment}
\end{equation}

Given a positive integer $m$, it is straightforward to compute
that the Ehrhart quasi-polynomial of the rectangle $\ell \times
[0, m]$ is given by
\begin{equation*}
   \ehr_{\ell \times [0, m]}(n)
   =  \tfrac{m}{s}n^{2}
      +  \big(
            m c_{\ell,0} (n)
            +  \tfrac{1}{s}
         \big)
         n
      +  c_{\ell,0} (n).
\end{equation*}
In particular, the ``linear'' coefficient function has minimum
period $s$, and the ``constant'' coefficient function is
identical to that of $\ehr_{\ell}$.  More strongly, we have the
following:
\begin{lem}
\label{lem:CoeffsofRectangleUnionIntegral}
   Suppose that a polygon $P$ is the union of $\ell \times [0,
   m]$ and an integral polygon $P'$ such that $P' \cap (\ell
   \times [0, m])$ is a lattice segment.  Then $c_{P,1}$ has
   minimum period $s$ and $c_{P,0} = c_{\ell,0}$.
\end{lem}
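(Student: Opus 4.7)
The plan is to use inclusion-exclusion on the decomposition $P = (\ell \times [0,m]) \cup P'$. Because $P'$ and $\ell \times [0,m]$ are closed polygonal regions and their intersection is a (one-dimensional) lattice segment, for every positive integer $n$ we have
\begin{equation*}
   \ehr_{P}(n)
   = \ehr_{\ell \times [0,m]}(n)
     + \ehr_{P'}(n)
     - \ehr_{P' \cap (\ell \times [0, m])}(n),
\end{equation*}
as $n(A \cup B) \cap \Z^{2} = (nA \cap \Z^{2}) \cup (nB \cap \Z^{2})$ and intersections of dilates behave correctly. So the lemma reduces to reading off the coefficient functions of the right-hand side.

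Next I would invoke what is already at hand for each summand. The formula for $\ehr_{\ell \times [0,m]}$ displayed just before the lemma gives $c_{\ell \times [0,m],0} = c_{\ell,0}$ and linear-coefficient function $m\,c_{\ell,0}(n) + 1/s$. Since $P'$ is integral, $\ehr_{P'}$ is an honest polynomial with constant term $1$. The intersection $P' \cap (\ell \times [0,m])$, being a lattice segment, likewise has a polynomial Ehrhart function with constant term $1$ (and linear coefficient equal to its lattice length). The periodic behavior of $\ehr_{P}$ is therefore entirely inherited from $\ehr_{\ell \times [0,m]}$.

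Combining, the constant terms of $\ehr_{P'}$ and $\ehr_{P' \cap (\ell \times [0,m])}$ cancel, yielding $c_{P,0}(n) = c_{\ell,0}(n)$, which proves the second assertion. For the linear coefficient we obtain
\begin{equation*}
   c_{P,1}(n)
   = m\, c_{\ell,0}(n) + \kappa,
\end{equation*}
where $\kappa \in \Q$ is a constant assembled from $1/s$, the boundary count of $P'$, and the lattice length of the shared segment.

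The only point that requires a brief argument is that scaling by $m$ and adding a constant does not collapse the period. Since $c_{\ell,0}(n) = 1 - \{n/s\}$ takes the $s$ distinct values $1, 1-\tfrac{1}{s}, \ldots, \tfrac{1}{s}$ as $n$ runs over a complete residue system modulo $s$, and $m$ is a positive integer, the function $m\, c_{\ell,0}$ also takes $s$ distinct values on that system; adding $\kappa$ merely translates these values. Hence the minimum period of $c_{P,1}$ is exactly $s$, completing the proof. The main (small) obstacle is confirming this non-collapse, since a priori multiplying by $m$ could conspire with the ambient constant to shorten the period; the explicit distinctness of the values of $c_{\ell,0}$ rules this out cleanly.
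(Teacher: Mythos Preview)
Your argument is correct and is precisely the computation the paper has in mind: the paper states the lemma without proof, treating it as an immediate consequence of the displayed formula for $\ehr_{\ell \times [0,m]}$ together with inclusion--exclusion against the integral pieces $P'$ and $P' \cap (\ell \times [0,m])$. Your explicit verification that $m\,c_{\ell,0}$ retains minimum period~$s$ (via the $s$ distinct values of $c_{\ell,0}$ on a residue system) is the only nontrivial point, and you handle it cleanly.
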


With these elementary facts in hand, we can now prove Theorem
\ref{thm:CoefficientPeriodSequences}.

\begin{proof}[%
   Proof of Theorem \ref{thm:CoefficientPeriodSequences}%
]%
   \label{proof:CoefficientPeriodSequences}
   Any integral polygon has period sequence $(1,1,1)$, so we may
   suppose that either $r \ge 2$ or $s \ge 2$.  Our strategy is to
   construct a polygon $H$ with period sequence $(1, s, 1)$ and a
   triangle $Q$ with period sequence $(r, 1, 1)$.  We will then be
   able to construct a polygon with period sequence $(r, s, 1)$
   for $r,s \ge 2$ by gluing $Q$ to $H$ along an integral edge.
   
   We begin by constructing a polygon with period sequence $(1, s,
   1)$ for an arbitrary integer $s \ge 2$.  (Figure
   \ref{fig:CoefficientPeriodsConstruction} below depicts the $s =
   3$ case of our construction.)  Define $H$ to be the heptagon
   with vertices
      \begin{align*}
         t_{1} 
         &= \big(
               -\tfrac{1}{s},\, s(s-1) + 1
            \big)^{\tran}, &
         v_{1}
         &= \big(
               1,\, s(s-1)
            \big)^{\tran}, \\
         t_{2}
         &= \big(
               -\tfrac{1}{s},\, -s(s-1) - 1
            \big)^{\tran}, &
         v_{2}
         &= \big(
               1,\, -s(s-1)
            \big)^{\tran}, \\
         u_{1}
         &= \big(
               0,\, s(s-1) + 1
            \big)^{\tran}, &
         w 
         &= \big(
               s - 1 + \tfrac{1}{s},\, 0
            \big)^{\tran}. \\
         u_{2}
         &= \big(
               0,\, -s(s-1) - 1
            \big)^{\tran}, &
      \end{align*}%
%
   \begin{figure*}[t!]
      \def\JPicScale{0.33}%
      \hspace*{\stretch{1}}%
      \subfloat[]{%
         \includegraphics{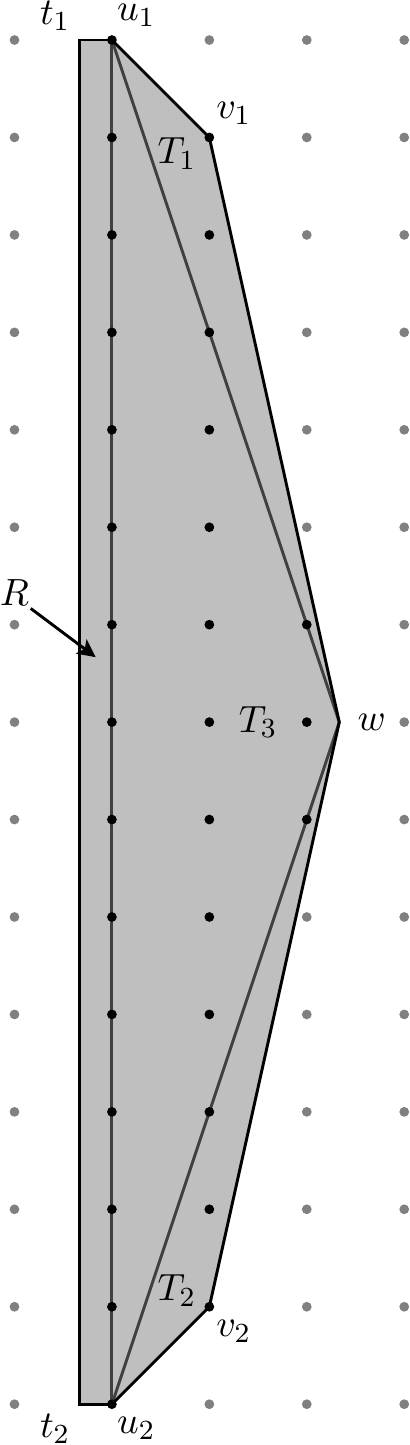}%
      }%
      \hspace*{\stretch{2}}%
      \subfloat[]{%
         \includegraphics{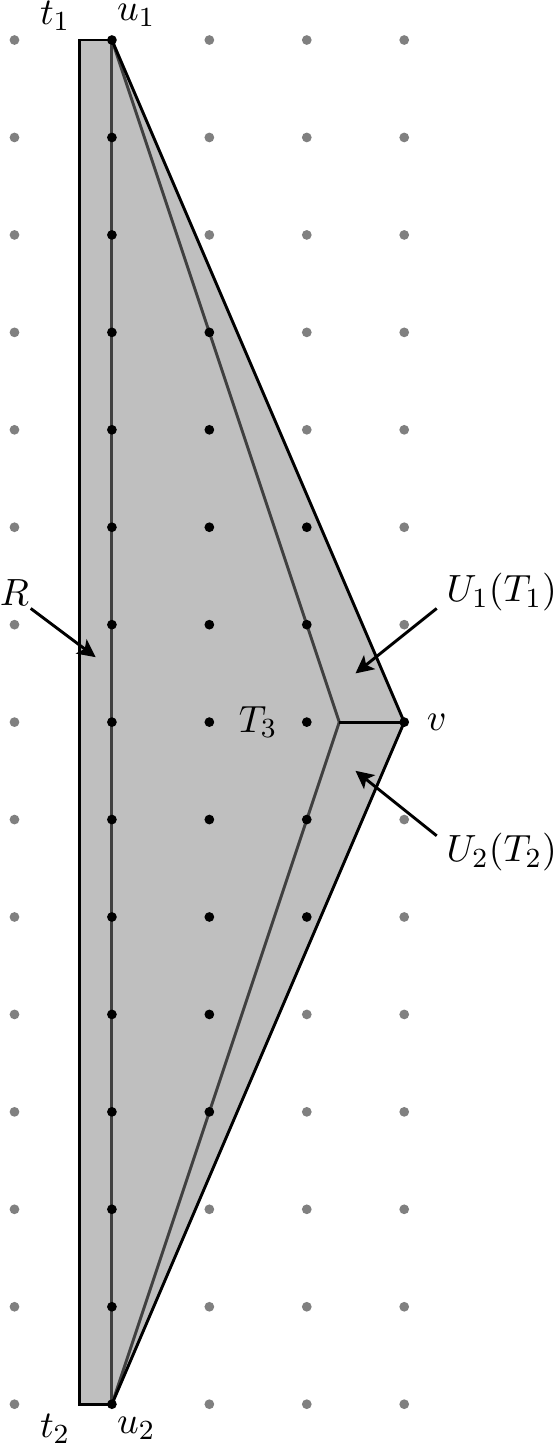}%
      }%
      \hspace*{\stretch{1}}%
      \caption{%
         On left: polygon $H$ in the case $s = 3$.  On right:
         polygon $H'$ after unimodular rearrangement of pieces
         of $H$.
      }%
      \label{fig:CoefficientPeriodsConstruction}%
   \end{figure*}%
   To show that $H$ has period sequence $(1, s, 1)$, we subdivide
   $H$ into a rectangle and three triangles as follows (see left
   of Figure \ref{fig:CoefficientPeriodsConstruction}):
   \begin{align*}
      R     &= \conv\{t_{1}, t_{2}, u_{2}, u_{1}\}, &
      T_{2} &= \conv\{ u_{2}, v_{2}, w \}, \\
      T_{1} &= \conv\{ u_{1}, v_{1}, w \}, &
      T_{3} &= \conv \{ u_{1}, u_{2}, w \}.
   \end{align*}
   Let $v = (s, 0)^{\tran}$.  Write $U_{1} = U_{u_{1} w}^{+}$
   and $U_{2} = U_{u_{2}w}^{-}$.  Then $U_{1}(T_{1}) = \conv
   \setof{u_{1}, v, w}$ and $U_{2}(T_{2}) = \conv \setof{u_{2},
   v, w}$. 
   
   Let $H' = R \cup U_{1}(T_{1}) \cup U_{2}(T_{2}) \cup T_{3}$
   (see right of Figure
   \ref{fig:CoefficientPeriodsConstruction}).
   Though $H'$ was formed from unimodular images of pieces of
   $H$, we do not quite have $\ehr_{H} = \ehr_{H'}$.  This is
   because each point in the half-open segment $(w, v]$ has two
   pre-images in $H$.  Since this segment is equivalent under a
   unimodular transformation to $h = (\tfrac{1}{s}, 1]$, the
   correct equation is
   \begin{equation}
   \label{eq:EhrHprimeFromEhrH}
      \ehr_{H} = \ehr_{H'} + \ehr_{h}.
   \end{equation}
           
   Let \mbox{$T = U_{1}(T_{1}) \cup U_{2}(T_{2}) \cup T_{3}$}.
   Then $T$ is an integral triangle intersecting $R$ along a
   lattice segment, and \mbox{$H' = R \cup T$}.  Hence, by Lemma
   \ref{lem:CoeffsofRectangleUnionIntegral}, $c_{H',1}$ has
   minimum period~$s$, and so, by equation
   \eqref{eq:EhrHprimeFromEhrH}, $c_{H,1}$ also has minimum
   period~$s$.
   
   It remains only to show that $c_{H,0}$ has minimum period $1$.
   Again, from equation \eqref{eq:EhrHprimeFromEhrH}, we have that
   \begin{equation}
   \label{eq:ConstantTerm}
      c_{H,0} = c_{H',0} + c_{h,0}.
   \end{equation}
   From Lemma \ref{lem:CoeffsofRectangleUnionIntegral}, we know
   that $c_{H',0} = c_{\ell,0}$.  Therefore, by equation
   \eqref{eq:ConstantCoeffofLineSegment}, $c_{H,0}$ is
   identically $1$.
   
   
   We now construct a triangle with period sequence $(r, 1, 1)$
   for integral $r \ge 2$.  Let
   \begin{equation*}
      Q = u_{1} + \conv\setof{(0, 0), (1, -1), (1/r, 0)}.
   \end{equation*}
   McMullen's bound (Theorem \ref{thm:McMullenBound}) implies
   that the minimum period of $c_{Q,1}$ is $1$.  Hence, it
   suffices to show that the minimum quasi-period of $\ehr_{Q}$
   is $r$.  Observe that $Q$ is equivalent to
   $\conv\setof{(0,0), (1,0), (0, 1/r)}$ under a unimodular
   transformation.  Hence, one easily computes that
   $\sum_{k=0}^{\infty} \ehr_{Q}(k) \zeta^{k} = (1-\zeta)^{-2}
   (1 - \zeta^{r})^{-1}$.  Note that among the poles of this
   rational generating function are primitive
   $r$\textsuperscript{th} roots of unity.  It follows from the
   standard theory of rational generating functions that
   $\ehr_{Q}$ has minimum quasi-period $r$ (see, \emph{e.g.},
   \cite[Proposition 4.4.1]{Sta1997}).
   
   
   
   Finally, given integers $r, s \ge 2$, let $P = H \cup Q$.  Note
   that $H$ and $Q$ have disjoint interiors, $H \cap Q$ is a
   lattice segment of lattice length $1$, and $H \cup Q$ is
   convex.  It follows that $P$ is a convex polygon and $\ehr_{P}
   = \ehr_{H} + \ehr_{Q} - \ehr_{[0,1]}$.  Therefore, $P$ has
   period sequence $(r, s, 1)$, as required.
\end{proof}

\section{Pseudo-reflexive polygons}

We conclude with some speculative remarks about the connection
between PIPs, reflexive polygons, and $\SL_{2}(\Q)$.  In
particular, nonintegral PIPs that contain only a single lattice
point in their interior appear to be nonintegral analogues of
reflexive polygons.  To explore this connection, we introduce the
notion of a \emph{pseudo-reflexive} polygon.

Recall that an integral polytope $P \subset \R^{n}$ is called
\emph{reflexive} if the polar dual $\dual{P} = \setof{y \in \R^{n}
\st \text{$\inner{x, y} \le 1$ for all $x \in P$}}$ of $P$ is also
an integral polytope.  We similarly define a
\emph{pseudo-reflexive} polytope to be a PIP $P$ such that $\dual
P$ is integral.  An example of a nonintegral pseudo-reflexive PIP
is the convex hull of $\setof{(0, -1),\, (1/3, 1/3),\, (-1/3,
2/3)}$.

Poonen and Rodriguez-Villegas \cite{PooRod2000} showed that, if $P
\subset \R^{2}$ is a reflexive polygon, then $\bnd_{P} +
\bnd_{\dual P} = 12$.  Hille and Skarke \cite{HilSka2002} observed
that this fact follows from a correspondence between reflexive
polygons and words equal to the identity in a certain presentation
of $\SL_{2}(\Z)$.  In particular, put
\begin{equation*}
   A \deftobe 
   \begin{bmatrix}
      1 & 1 \\
      0 & 1
   \end{bmatrix}, \qquad
   B \deftobe
   \begin{bmatrix}
      1 & 0 \\
      -1 & 1
   \end{bmatrix}.
\end{equation*}
Then $\SL_{2}(\Z)$ is generated by $A$ and $B$, and the relations
of this presentation are
\begin{align}
   \label{eq:SLRelations}
   ABA &= BAB, & (AB)^{6} & =
   \begin{bmatrix}
      1 & 0 \\
      0 & 1
   \end{bmatrix}.
\end{align}
Given a word
\begin{equation}
   \label{eq:HSword}
   B^{b_{n}}A^{a_{n}} B^{b_{n-1}}A^{a_{n-1}} \dotsm B^{b_{1}}A^{a_{1}}
   = 
   \begin{bmatrix}
      1 & 0 \\
      0 & 1
   \end{bmatrix}
\end{equation}
in $\SL_{2}(\Z)$ with $a_{i}, b_{i}$ positive integers, one reads
off a closed polygonal path $v_{0}v_{1}\dotsb v_{n-1}v_{0}$ with a
positive winding number $w$ about the origin as follows: Put
$v_{0} \deftobe (1,0)$, $d_{0} \deftobe (0, 1)$, and recursively
define
\begin{equation}
   \label{eq:HSRecursiveRelations}
   \begin{bmatrix}
      v_{i}  \\
      d_{i}
   \end{bmatrix}
   \deftobe
   B^{b_{i}} A^{a_{i}}
   \begin{bmatrix}
      v_{i-1}  \\
   d_{i-1} \end{bmatrix},
   \qquad
   \text{for $1 \le i < n$}.
\end{equation}

If the winding number $w$ equals $1$, then this path is the
boundary of a reflexive polygon $P$ with $\bnd_{P} = \sum_{i}
a_{i}$ and $\bnd_{\dual P} = \sum_{i} {b_{i}}$.
Moreover, every reflexive polygon can be obtained in this way, up
to an automorphism of the lattice.  It follows directly from the
relations \eqref{eq:SLRelations} that $\bnd_{P} + \bnd_{\dual P}$
is a multiple of $12$.  Hille and Skarke show that, in general,
$\sum_{i} a_{i} + \sum_{i} b_{i} = 12 w$, from which the
Poonen--Rodriguez-Villegas result follows.

Our observation is that every pseudo-reflexive polygon $P$ also
corresponds to a word, this time in the infinite set of generators
\begin{equation}
   \label{eq:SLQgenerators}
   A^{r} = 
   \begin{bmatrix}
      1 & r \\
   0 & 1 \end{bmatrix} %
   \, \text{(for all $r \in \Q$)}, %
   \qquad B = 
   \begin{bmatrix}
      1 & 0 \\
      -1 & 1
   \end{bmatrix}
\end{equation}
of $\SL_{2}(\Q)$.  Let $v_{0}, v_{1}, \dotsc, v_{n-1} \in \Q^{2}$
be such that the polygonal path $v_{0}v_{1}\dotsb v_{n-1}v_{0}$ is
the boundary of a pseudo-reflexive PIP in which each $v_{i}$ is a
vertex.  By Theorem \ref{thm:BelowScottsRegion}, this boundary
contains a lattice point.  By applying an automorphism of the
lattice, we may suppose that this lattice point is $v_{0} =
(0,1)$.  Let $a_{i} \in \Q$ be the lattice length of the segment
$v_{i-1}v_{i}$.  Set $b_{i}' \in \Z$ to be the lattice length of
the edge of $\dual P$ with outer normal $v_{i}$.  Put $b_{i}
\deftobe \den(v_{i})b_{i}'$.  Then, corresponding to $P$, we get a
word
\begin{equation*}
   B^{b_{n}}A^{a_{n}} B^{b_{n-1}}A^{a_{n-1}} \dotsm B^{b_{1}}A^{a_{1}}
   = 
   \begin{bmatrix}
      1 & 0 \\
      0 & 1
   \end{bmatrix}
\end{equation*}
in the generators \eqref{eq:SLQgenerators} equal to the identity.

Conversely, given such a word with the $a_{i} \in \Q$, $a_{i} >
0$, $a_{1} + \dotsb + a_{n} \in \Z$, and the $b_{i}$ positive
integers divisible by $\parens{\den\parens{\sum_{j=1}^{i}
a_{j}}}^{2}$, we have a corresponding polygonal path
$v_{0}v_{1}\dotsb v_{n-1}v_{0}$ defined by the same recursive
relations \eqref{eq:HSRecursiveRelations} used by Hille and
Skarke.  If the winding number of this path about the origin is
$1$, then the path is the boundary of a polygonal pseudo-reflexive
polygon.

\bibliographystyle{amsplain-fi-arxlast}
\bibliography{MasterBibliography}
\end{document}